\def\un{\mathbf{1}}
\def\mpn{\medskip\par\noindent}
\newcommand{\gMod}[1]{#1{\hbox{-}\mathsf{Mod}}}
\def\zero{\{0\}}
\definecolor{ao(english)}{rgb}{0.0, 0.5, 0.0}
\definecolor{brickred}{rgb}{0.8, 0.25, 0.33}
\definecolor{burntorange}{rgb}{0.8, 0.33, 0.0}
\definecolor{beaver}{rgb}{0.62, 0.51, 0.44}
\definecolor{brown(traditional)}{rgb}{0.59, 0.29, 0.0}
\definecolor{ao(english)}{rgb}{0.0, 0.5, 0.0}
\definecolor{verde}{rgb}{0.12, 0.8, 0.17}
\def\rouge{\color{red}}
\def\sur{\overline}
\newcommand{\rac}{\mathbb Q}
\newcommand{\cp}{\mathcal P}
\newcommand{\Ind}{\mathrm{Ind}}
\newcommand{\Res}{\mathrm{Res}}
\newcommand{\Def}{\mathrm{Def}}
\newcommand{\Inf}{\mathrm{Inf}}
\newcommand{\Iso}{\mathrm{Iso}}
\newcommand{\Hom}{\mathrm{Hom}}
\newcommand{\Aut}{\mathrm{Aut}}
\newcommand{\Ker}{\mathrm{Ker}}
\theoremstyle{plain}
\newtheorem{teo}{Theorem}
\newtheorem*{teo-non}{Theorem}
\newtheorem{prop}[teo]{Proposition}
\newtheorem{lema}[teo]{Lemma}
\theoremstyle{definition}
\newtheorem{defi}[teo]{Definition}
\newtheorem{nota}[teo]{Notation}
\theoremstyle{remark}
\newtheorem{rem}[teo]{Remark}
\def\CD{\mathcal{D}}
\def\CP{\mathcal{P}}
\def\C{\mathbb{C}}
\def\F{\mathbb{F}}
\def\Q{\mathbb{Q}}
\def\Z{\mathbb{Z}}
\def\mpn{\medskip\par\noindent}
\def\op{^{\mathrm{op}}}
\def\endpf{~\leaders\hbox to 1em{\hss\  \hss}\hfill~\raisebox{.5ex}{\framebox[1ex]{}}\smallskip\par}
\author{Serge Bouc and Nadia Romero\\
}
\title{Shifted functors of linear representations are semisimple}
\date{}
\begin{document}

\maketitle
\begin{minipage}{14cm}
\begin{footnotesize}
{\bf Abstract:} We prove that,  for any fields $k$ and $\mathbb{F}$ of characteristic $0$ and any finite group $T$, the category of modules over the shifted Green biset functor $(kR_{\mathbb{F}})_T$ is semisimple.\vspace{1ex}\\
{\bf Keywords: } biset functor, Green functor, functor category, semisimple.\\
{\bf AMS MSC (2020): }16Y99, 18D15, 20J15.
\end{footnotesize}
\end{minipage}

\section{Introduction}
Shifted biset functors where introduced by the first author in 2010 (see Section 8.2 of \cite{biset}). The shifting of a functor $F$ by a group $T$ is also called the Yoneda-Dress construction at $T$ of the functor $F$. For a group $T$, this construction defines a self-adjoint exact linear endofunctor of the category of biset functors (Proposition 8.2.7 in~\cite{biset}). It is also known that a shifted Green biset functor is again a Green biset functor (Lemma 4.4 in \cite{lachica}) and that, for a Green biset functor $A$, a shifted $A$-module is an $A$-module (Example 8.5.6 in \cite{biset}). Shifted functors play a crucial role in the definition of the internal hom and the tensor product of biset functors. Furthermore, they have shown to be of major importance in the description of the category of modules over a Green biset functor.

In his PhD thesis~\cite{tesbenja},  B. Garc\'ia conjectured that, for any fields $k$ and $\mathbb{F}$ of characteristic~$0$ and any finite group $T$, the category of modules over the shifted Green biset functor $(kR_{\mathbb{F}})_T$ is semisimple (see Conclusiones in \cite{tesbenja}). Particular cases of this conjecture have already been proved; L. Barker showed in \cite{barker} that the category of $kR_\Q$-modules is semisimple, and Garc\'ia showed in his thesis (see also \cite{garcia-shifted}) that the same holds for $(\mathbb{C}R_{\mathbb{C}})_T$-modules, for any finite group $T$.

 In this article we prove Garc\'ia's conjecture, it is Theorem \ref{shifted semisimple} in Section \ref{shifted}. Then, we determine the multiplicity of each simple module in any $(kR_{\mathbb{F}})_T$-module in the case where $(kR_{\mathbb{F}})_T$ is restricted to groups $G$ such that $(|T|,\, |G|)=1$. The last result generalizes Theorem 1.9 in \cite{barker}. To prove the conjecture, we introduce in Section \ref{criterion}, a criterion to determine when a Green biset functor $A$ is semisimple (i.e. when every $A$-module is semisimple). This criterion will be used again in a forthcoming article, dealing with \textit{Green fields}.

\section{Preliminaries}

%We assume that the reader is familiar with the definitions of Green biset functors and their modules. %over a Green biset functor. Examples and some properties of Green biset functors and their modules can be found in sections 1.1  and 1.2 of \cite{centros}. As in that article, 
We consider the biset category defined over a commutative unital ring $k$ and having as objects a class $\mathcal{D}$ of finite groups, closed under subquotients and direct products. Unless otherwise stated, when we write \textit{Green biset functor} we will mean   \textit{Green $\CD$-biset functor over $k$}, for some $\mathcal{D}$ and $k$.  The Burnside functor will be denoted by $kB$, as usual, and if $\mathbb{F}$ is a field of characteristic $0$, the functor of linear $\mathbb{F}$-representations will be denoted by $kR_\mathbb{F}$.

We recall the definition of the category $\mathcal{P}_A$, associated to the Green biset functor~$A$.

The trivial group will be denoted by $\un$.  For a finite group $G$, we denote by $\Delta(G)$ the diagonal subgroup of $G\times G$, that is $\Delta(G)=\{(g,g)\mid g\in G\}$.

\begin{defi} \label{PA}Let $A$ be a Green  biset functor with identity element $\varepsilon\in A(\un)$. The category $\cp_A$ is defined in the following way:
\begin{itemize}
 \item The objects of $\cp_A$ are all finite groups {in $\CD$}.
 \item If $G$ and $H$ are groups {in $\CD$}, then ${\Hom}_{\cp_A}(H,\, G)=A(G\times H)$.
 \item Let $H,\, G$ and $K$ be groups {in $\CD$}. The composition of $\beta\in A(H\times G)$ and $\alpha\in A(G\times K)$ in $\cp_A$ is the following:
\begin{displaymath}
\beta \circ \alpha = A\left(\Def^{\,H\times\Delta(G)\times K}_{H\times K}\circ\Res^{H\times G\times G\times K}_{H\times\Delta(G)\times K}\right)(\beta\times\alpha).
\end{displaymath}
\item For a group $G$ in $\CD$, the identity morphism $\varepsilon_G$ of $G$ in $\cp_A$ is $A(\Ind_{\Delta(G)}^{G\times G}\circ\nolinebreak \Inf_\un^{\,\Delta (G)})(\varepsilon)$.
\end{itemize}
\end{defi}

For a morphism $\alpha\in A(H\times G)$ from $G$ to $H$ in the category $\CP_A$, we denote by $\alpha\op$ the element of $A(G\times H)$ - i.e. the morphism from $H$ to $G$ - defined by
$$\alpha\op=A(\Iso_{H\times G}^{G\times H})(\alpha),$$
where $\Iso_{H\times G}^{G\times H}$ is the group isomorphism $H\times G\to G\times H$ swapping the components.

For a Green biset functor $A$, an $A$-module is defined as a biset functor $M$, together with  bilinear products $A(G)\times M(H)\rightarrow M(G\times H)$ that satisfy natural conditions of associativity, identity element and functoriality. It is well known that this definition is equivalent to defining an $A$-module  as  a $k$-linear functor from the category $\cp_A$ to $\gMod{k}$. We denote the category of $A$-modules by  $\gMod{A}$. Important objects in $\gMod{A}$ are the shifted functors  (also called  shifted modules) $M_L$. If $L$ is a fixed finite group, the functor $M_L$ is defined as $M(G\times L)$ in a group $G\in \mathcal{D}$ and as $M(\alpha \times L)$ in an arrow $\alpha\in A(H\times G)$, for more details see Definition 10 in \cite{centros}. 

Given a Green biset functor $A$, defined on a class $\mathcal{D}$, we can generalize the Yoneda-Dress construction of $A$ in the following way. Let $T$ be a group in $\mathcal{D}$ and $\mathcal{D}'$ be a subclass of $\mathcal{D}$  possibly not containing $T$. The shifted functor $A_T$ sending $G\in \mathcal{D}'$ to $A(G\times T)$ and an element $\varphi\in kB(H,\, G)$ to $A(\varphi\times T)$ is a Green $\mathcal{D}'$-biset functor. 

When dealing with the simple objects of $\gMod{A}$, we will need the following notions.

\begin{defi}
Let $A$ be a Green biset functor. For a group $H\in \mathcal{D}$, the \textit{essential algebra}, $\widehat{A}(H)$, of $A$ on $H$, is the quotient of $A(H\times H)$ over the ideal generated by elements of the form $a\circ b$, where $a$ is in $A(H\times K)$, $b$ is in $A(K\times H)$ and $K$ runs over the groups in $\mathcal{D}$ of order smaller than $|H|$.
\end{defi}

The following functors can  be defined in more general settings, we recall the definitions in the context of $A$-modules. 

\begin{defi}
Let $A$ be a Green biset functor, $H$ a group in $\mathcal{D}$ and $V$ an $A(H\times H)$-module, the $A$-module $L_{H,\, V}$ is defined in $G\in \mathcal{D}$ as
\begin{displaymath}
L_{H,\, V}(G)=A(G\times H)\otimes_{A(H\times H)}V,
\end{displaymath}
and in an obvious way in arrows.
\end{defi}

If $V$ is a simple $A(H\times H)$-module, then  $L_{H,\,V}$ has a unique maximal subfunctor $J_{H,\,V}$, which in each evaluation is equal to 
\begin{displaymath}
J_{H,\, V}(G)=\left\{\sum_{i=1}^na_i\otimes v_i\in L_{H,\, V}(G)\mid\sum_{i=1}^n(b\circ a_i) v_i=0\ \forall b\in A(H\times G) \right\}.
\end{displaymath}

The quotient $S_{H,\, V}= L_{H,\,V}/J_{H,\,V}$ is a simple $A$-module such that $S_{H,\, V}(H)=V$. On the other hand, if $S$ is a simple $A$-module and $H\in \mathcal{D}$ is such that $S(H)\neq \zero$, then  taking $V=S(H)$ gives $S\cong S_{H,\,V}$.

%\begin{obs}
%{\color{red} quitar?}$A(1)\neq 0$.
%\end{obs}
%The map $A(G)\otimes_{A(1)} M(1)\to M(G)$ is induced by the product $\times$ on $M$. {\color{red} quitar?}
The following lemma is a generalization of Lemma 3.3. in \cite{lachica}, its proof is an easy modification of that of Lemma 3.3 in \cite{lachica}, so we give just a sketch of it, as a reminder.

\begin{lema}
\label{simquo}
Let $A$ and $C$ be Green biset functors and suppose there exists an epimorphism $f:A\rightarrow C$ of Green biset functors. Then a simple $C$-module is a simple $A$-module $S$  for which there exists a finite group $H\in\CD$ such that $S(H)\neq \zero$ and $f_{H\times H}$ induces a $C(H\times H)$-module structure on $S(H)$.
\end{lema}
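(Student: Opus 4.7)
The plan is to prove a two-way inclusion: a simple $C$-module is, via the epimorphism $f$, a simple $A$-module with the stated property, and conversely such an $A$-module carries a compatible simple $C$-module structure.

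For the easy direction, I take a simple $C$-module $S$ and restrict scalars along $f$, declaring $a \cdot s := f_G(a) \cdot s$ for $a \in A(G)$ and $s \in S(H)$. Because $f$ is surjective, every element of $C(G)$ has the form $f_G(a)$, so any $A$-submodule of $S$ is automatically a $C$-submodule; hence $S$ remains simple as an $A$-module. Picking any $H \in \CD$ with $S(H) \neq \zero$, the definition makes the $A(H\times H)$-action on $S(H)$ factor through $f_{H\times H}$, providing the required $C(H\times H)$-module structure.

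For the converse, I start from a simple $A$-module $S$ and a group $H$ as in the hypothesis, set $V := S(H)$, and use the hypothesis to promote $V$ to a $C(H\times H)$-module whose pullback along $f_{H\times H}$ recovers the given $A(H\times H)$-structure. By the classification of simple $A$-modules recalled before the statement, $S \cong S_{H,V}$, so $V$ is simple over $A(H\times H)$; surjectivity of $f_{H\times H}$ then forces $V$ to be simple over $C(H\times H)$ as well. The analogous construction performed with $C$ in place of $A$ therefore produces a simple $C$-module which, by the direction already proved, is also a simple $A$-module with evaluation $V$ at $H$, carrying on $V$ the same $A(H\times H)$-structure as $S$ does. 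The uniqueness part of the classification then forces this $C$-module to be isomorphic to $S$ as an $A$-module, transporting the desired $C$-structure onto $S$.

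The only delicate point is matching the two $A(H\times H)$-module structures on $V$, one coming from $S$ and one coming from the $C$-module built out of $V$; this is exactly where the factoring hypothesis is needed, and it is what allows the argument to avoid comparing the submodules $J_{H,V}$ computed over $A$ and over $C$ directly. Once this identification is made, the proof reduces to the uniqueness part of the classification of simples, consistent with the authors' remark that the argument is a routine modification of Lemma~3.3 in~\cite{lachica}.
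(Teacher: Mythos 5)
Your argument is correct, but it follows a genuinely different route from the paper. The paper first observes that the simple $C$-modules are exactly the simple $A$-modules on which $\Ker f$ acts by zero, and then proves the nontrivial direction by a direct computation: for $a\in\Ker f(K)$ and $\varphi\in A(G\times H)$, the ideal property of $\Ker f$ under external products and composition gives $\psi\circ(a\times\varphi)\in\Ker f(H\times H)$ for all $\psi\in A(H\times K\times G)$, and since $\Ker f(H\times H)$ kills $V=S(H)$ by hypothesis, the element $(a\times\varphi)\otimes v$ lands in $J_{H,V}(K\times G)$; hence $\Ker f(K)$ sends $S(G)$ to zero and the $C$-structure lives on $S$ itself. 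You instead exploit the parametrization of simple modules by pairs $(H,V)$ on both sides: you note that $V$ is simple over $C(H\times H)$, form the simple $C$-module $S^{C}_{H,V}$, restrict it along $f$ (your easy direction, which matches the paper's), and use the uniqueness statement ``a simple $A$-module $S$ with $S(H)\neq\zero$ satisfies $S\cong S_{H,S(H)}$'' to identify this restriction with $S$ as $A$-modules. This is legitimate and does avoid any comparison of $J_{H,V}$ computed over $A$ and over $C$, at the price of invoking the classification machinery (including the standard facts that $S(H)$ is simple over $A(H\times H)$ and that restriction along an objectwise surjection preserves simplicity) and of obtaining the $C$-structure on $S$ only up to isomorphism of $A$-modules, which suffices for the lemma as stated; the paper's computation is more self-contained and gives the sharper intermediate fact that $\Ker f$ literally annihilates $S$, which is the form in which the lemma is used later.
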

\begin{proof}
It is easy to see that the simple $C$-modules coincide with the simple $A$-modules in which $\Ker f$ has a zero action. Hence, clearly if $S$ is such an $A$-module, then $f_{H\times H}$ induces an action of $C(H\times H)$ on $S(H)$ for any fixed $H$ such that $S(H)\neq \zero$. It remains then to see that if a simple $A$-module $S$ satisfies this last condition, then $\Ker f$ has a zero action on $S$. To do this we take arbitrary groups $K$ and $G$, we have that $S(G)$ is isomorphic to $L_{H,\, V}(G)$ over $J_{H,\, V}(G)$. Given $a\in \Ker f(K)$, we have that $a\times\varphi$ is in $\Ker f(K\times G\times H)$ for any $\varphi\in A(G\times H)$, and so $\psi\circ(a\times \varphi)$ is in $\Ker f(H\times H)$ for any $\psi\in A(H\times K\times G)$. This allows us to show that $\Ker f(K)$ sends $L_{H,\, V}(G)$ to $J_{H,\, V}(K\times G)$, hence it sends $S(G)$ to zero in $S(K\times G)$.
\end{proof}

\section{A semisimplicity criterion}
\label{criterion}

\begin{defi}  An $A$-module is called {\em semisimple} if it is the sum of its simple $A$-submodules.
A Green biset functor $A$ is called {\em semisimple}  if all $A$-modules are semisimple.
\end{defi}
\begin{lema}\label{lemma-semisimple}
A Green biset functor $A$ is semisimple if and only if for every group $H$, the $A$-module $A_H$ is  semisimple. 
\end{lema}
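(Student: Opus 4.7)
The ``only if'' direction is immediate from the definition, so the real content is the converse: assuming that $A_H$ is semisimple for every $H\in\mathcal{D}$, I need to show that every $A$-module is semisimple. The key observation I would use is that under the standard equivalence between $\gMod{A}$ and the category of $k$-linear functors $\mathcal{P}_A \to \gMod{k}$, the shifted module $A_H$ is exactly the representable functor $\Hom_{\mathcal{P}_A}(H, -)$, since $A_H(G) = A(G\times H) = \Hom_{\mathcal{P}_A}(H, G)$.

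Given this identification, I would apply the Yoneda lemma to obtain, for any $A$-module $M$ and any $H\in\mathcal{D}$, a natural bijection $\Hom_{\gMod{A}}(A_H, M) \cong M(H)$; explicitly, an element $m \in M(H)$ corresponds to the $A$-module morphism $\phi_m \colon A_H \to M$ characterised by $\phi_m(\varepsilon_H) = m$, where $\varepsilon_H \in A(H\times H)$ is the identity of $H$ in $\mathcal{P}_A$. The central step is then to show that $M$ equals the sum of the images of all such morphisms: for any $G \in \mathcal{D}$ and any $x \in M(G)$ one has $x = \phi_x(\varepsilon_G)$, so $x \in \Im(\phi_x)$, which gives $M = \sum_{H,\,m}\Im(\phi_m)$.

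To finish, each $\Im(\phi_m)$ is a quotient of $A_H$, and since $A_H$ is semisimple by hypothesis, so is $\Im(\phi_m)$; a sum of semisimple submodules being semisimple, one concludes that $M$ is semisimple. I do not anticipate a serious obstacle here: the argument is essentially the standard fact that a $k$-linear functor category is semisimple as soon as every representable object is semisimple, and the only slightly delicate point is the identification of the shifted modules $A_H$ with the representables in $\mathcal{P}_A$, which follows directly from the definitions.
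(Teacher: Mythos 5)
Your argument is correct and is essentially the paper's own proof: the paper likewise identifies the shifted modules $A_H$ with the representable functors of $\mathcal{P}_A$ and observes that every $A$-module is a quotient of a direct sum of such functors, hence semisimple. Your Yoneda-lemma elaboration just makes this quotient statement explicit.
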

\begin{proof} If $A$ is semisimple, then in particular every $A$-module of the form $A_H$ is semisimple. Conversely, suppose that every $A_H$ is semisimple. As these modules can be view as the representable functors of the category $\mathcal{P}_A$, any $A$-module is a quotient of a direct sum of functors of the form $A_{H_i}$. Hence any $A$-module is semisimple.
\end{proof}

\begin{prop}\label{implies semisimple}
Let $A$ be a Green biset functor over a field $k$. Assume the following:
\begin{enumerate}
\item If $L$ is a finite group  in $\CD$, the $k$-algebra  $A(L\times L)$ is semisimple.
\item If $H$ is a finite group  in $\CD$, the bilinear map
$$(\alpha,\beta)\in A(H\times L)^2\mapsto \beta^{\rm op}\circ\alpha\in A(L\times L)$$
is non degenerate, i.e. if $\beta\op\circ\alpha=0$ for all $\beta\in A(H\times L)$, then $\alpha=0$.
\end{enumerate}
Then $A$ is a semisimple Green biset functor. 
\end{prop}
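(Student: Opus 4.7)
The plan is to invoke Lemma \ref{lemma-semisimple} and reduce to showing that for every $H \in \CD$, the shifted module $A_H$ is semisimple as an $A$-module. The strategy is to decompose $A_H$ using the semisimple algebra $R := A(H\times H)$ provided by hypothesis~1, and then to prove each piece of the decomposition is simple by applying the non-degenerate pairing of hypothesis~2.

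For the decomposition I would use that $A_H(G)=A(G\times H)$ carries a natural right $R$-action by composition in $\cp_A$ (namely $\alpha\cdot c=\alpha\circ c$), which commutes with the left $A$-action on $A_H$. Hypothesis~1 gives a decomposition $R=\bigoplus_iV_i^{(n_i)}$ of the regular left $R$-module into its simple summands, and the identification $A_H\cong A_H\otimes_R R$, together with the compatibility of tensor products with direct sums, yields an isomorphism of $A$-modules
$$A_H\;\cong\;\bigoplus_iL_{H,V_i}^{(n_i)}.$$
It then suffices to prove that $L_{H,V}$ is simple for every simple left $R$-module $V$.

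To prove $L_{H,V}$ is simple, I would show $J_{H,V}=0$, so that $L_{H,V}=S_{H,V}$. Since $R$ is semisimple, the simple module $V$ is isomorphic to a minimal left ideal $Re$ for some idempotent $e\in R$. Under the resulting identification $L_{H,V}(L)\cong A(L\times H)\cdot e$ (via $\alpha\otimes e\leftrightarrow\alpha\circ e$), an element $\alpha\otimes e$ with $\alpha\in A(L\times H)\cdot e$ (hence $\alpha\circ e=\alpha$) lies in $J_{H,V}(L)$ exactly when $(b\circ\alpha)\cdot e=0$ in $V$ for every $b\in A(H\times L)$; using $\alpha\circ e=\alpha$, this collapses to the condition $b\circ\alpha=0$ in $R$ for every $b\in A(H\times L)$.

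The main step, and the only place where hypothesis~2 is used, is to deduce $\alpha=0$ from this vanishing. Applying hypothesis~2 with the roles of $H$ and $L$ interchanged yields the non-degeneracy of the bilinear map $A(L\times H)^2\to A(H\times H)$, $(\gamma,\delta)\mapsto\delta^{\rm op}\circ\gamma$; the condition $b\circ\alpha=0$ for every $b\in A(H\times L)$ is precisely $\delta^{\rm op}\circ\alpha=0$ for every $\delta=b^{\rm op}\in A(L\times H)$, which forces $\alpha=0$. Hence $J_{H,V}=0$, so $L_{H,V}$ is simple, the displayed decomposition exhibits $A_H$ as semisimple, and Lemma~\ref{lemma-semisimple} completes the proof.
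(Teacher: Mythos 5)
Your proof is correct, and it takes a route that differs in structure from the one in the paper, although it leans on the same two pillars in the same places. You use hypothesis~1 to split the regular module of $R=A(H\times H)$ into minimal left ideals $Re$ and transport this through $A_H\otimes_R-$ into an explicit $A$-module decomposition $A_H\cong\bigoplus_i L_{H,V_i}^{(n_i)}$, and you use hypothesis~2 (with the two groups interchanged, which is legitimate since the hypothesis is assumed for all pairs of groups in $\CD$ and $\beta\mapsto\beta\op$ is an involutive bijection $A(H\times L)\to A(L\times H)$) to show $J_{H,V}=0$, so each summand is the simple module $S_{H,V_i}$; your computation $(b\circ\alpha)\cdot e=b\circ(\alpha\circ e)=b\circ\alpha$ is exactly what makes the $J_{H,V}$ condition collapse to the pairing condition. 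The paper instead never decomposes $A_L$ into simples: it sets up the mutually inverse correspondences $M\mapsto M(L)$ and $V\mapsto\langle V\rangle$ between $A$-submodules of $A_L$ and left ideals of $A(L\times L)$, using hypothesis~2 to show that a submodule vanishing at $L$ is zero, and hypothesis~1 to manufacture a complementary ideal $W$ with $M\oplus\Psi(W)=A_L$. Your version buys explicitness — it identifies the simple constituents of $A_H$ as the $S_{H,V_i}$ with multiplicities equal to those of $V_i$ in the regular module of $A(H\times H)$, and incidentally shows $L_{H,V}=S_{H,V}$ for these $V$ — at the cost of invoking the $L_{H,V}/J_{H,V}/S_{H,V}$ formalism and the idempotent description of simple modules over a semisimple algebra; the paper's version is more self-contained and yields the slightly stronger structural fact that evaluation at $L$ is a poset isomorphism from submodules of $A_L$ onto left ideals of $A(L\times L)$. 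The only point worth making explicit in your write-up is that every element of $L_{H,V}(L)$ can indeed be written as $\alpha\otimes e$ with $\alpha=\alpha\circ e$ (which is what your identification $L_{H,V}(L)\cong A(L\times H)\circ e$ provides), so that checking the vanishing on such elements really does give $J_{H,V}=0$.
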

\begin{proof}
We will show that $A_L$ is a semisimple $A$-module, for each finite group $L$. Let $M$ be an $A$-submodule of $A_L$. 
\begin{itemize}
\item First, $M(L)$ is a left ideal of the algebra $A(L\times L)$, and we claim that if $M(L)=\zero$, then $M=0$. Indeed, if $M(L)=\zero$, then for any finite group $H$, the $k$-vector space $M(H)$ is contained in the set of elements $\alpha$ of $A_L(H)=A(H\times L)$ such that $\beta\op\circ\alpha=0$ for each $\beta\in A(H\times L)$ (since $\beta\op\circ \alpha\in M(L)=\zero$). By assumption 2, we have $M(H)=\zero$, as claimed. 
\item Now let $\Phi$ be the correspondence sending an $A$-submodule $M$ of $A_L$ to the (left) ideal $M(L)$ of the algebra $A(L\times L)$. In the other direction, let $\Psi$ be the correspondence sending a left ideal $V$ of $A(L\times L)$ to the $A$-submodule $\langle V\rangle$ of $A_L$ generated by $V$.\par
Clearly $\Phi\circ\Psi(V)=V$, since $\langle V\rangle(L)=A(L\times L)(V)=V$.\par
  Conversely, let $M$ be an $A$-submodule of $A_L$. Then $M(L)$ is a left ideal of $A(L\times L)$, which is semisimple by assumption 1. Then (see \cite{anderson-fuller} Theorem 9.6) there exists a left ideal  $W$ of $A(L\times L)$ such that
$$M(L)\oplus W=A(L\times L).$$
Let $P=M+\Psi(W)$. The evaluation of $P$ at $L$ is 
$$P(L)=M(L)+W=A(L\times L).$$
Then $P=A_L$, because $A_L$ is generated by $A_L(L)=A(L\times L)$.  Moreover, the intersection $I=M\cap\Psi(W)$, evaluated at $L$, is equal to
$$I(L)=M(L)\cap W=\zero.$$
It follows that $I=\zero$, and then $M\oplus \Psi(W)=A_L$.  \par
Now consider $M'=\Psi\circ\Phi(M)$. This is an $A$-submodule of $M$, and the same arguments show that $M'\oplus \Psi(W)=A_L$ also. But we have
$$M'\leq M\leq M'\oplus \Psi(W),$$
and then $M=M'\oplus\big(M\cap \Psi(W)\big)=M'\oplus I=M'$.\par
We have shown that $\Phi$ and $\Psi$ are mutually inverse bijections between the poset of $A$-submodules of $A_L$ and the poset of left ideals of $A(L\times L)$. Since the minimal elements of these posets are the simple $A$-submodules and the simple $A(L\times L)$-submodules, repectively, and since the least upper bound in each of these posets is the sum of submodules, it follows that $A_L$ is equal to the sum of its simple $A$-submodules. In other words $A_L$ is a semisimple $A$-module, for each finite group~$L$. By Lemma \ref{lemma-semisimple}, $A$ is semisimple.
\end{itemize}
\end{proof}

%\begin{rem}
%The above proof is formally correct without the assumption that $k$ be a field. But the notion of semisimple algebra over a commutative ring $k$ seems to make sense only with this assumption, or at least that $k$ be semisimple. 
%\end{rem}
\begin{rem}
One can show that the abovementioned Theorem 9.6 of \cite{anderson-fuller} can be generalized to modules over a Green biset functor $A$: An $A$-module $M$ is semisimple if and only if any $A$-submodule $N$ of $M$ admits a complement in $M$ (that is, an $A$-submodule $L$ of $M$ such that $N\oplus L=M$). Nevertheless, the proof of this fact is not straightforward. Notice that this is not used in the proof of Proposition \ref{implies semisimple}. 
\end{rem}

\section{Shifted functors of linear representations}
\label{shifted}
In this section $k$ denotes a field of characteristic 0.

\begin{teo} \label{shifted semisimple}Let  $\F$ be a field of characteristic 0. For any finite group $T$, the functor $(kR_\F)_T$ is a semisimple Green biset functor.
\end{teo}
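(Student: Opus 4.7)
The strategy is to verify the two hypotheses of Proposition~\ref{implies semisimple} for $A=(kR_\F)_T$: the semisimplicity of $A(L\times L)=kR_\F(L\times L\times T)$ under the composition of $\mathcal{P}_A$, and the non-degeneracy of the pairing $(\alpha,\beta)\mapsto \beta^{\mathrm{op}}\circ\alpha$. The whole argument is character-theoretic. Unwinding the Green structure of the Yoneda--Dress construction (external tensor product of $\F$-representations followed by restriction along the diagonal of $T$) together with Definition~\ref{PA}, one obtains, for $\alpha\in A(G\times K)$ and $\beta\in A(H\times G)$,
\begin{equation*}
\chi_{\beta\circ\alpha}(h,k,t)=\frac{1}{|G|}\sum_{g\in G}\chi_\beta(h,g,t)\,\chi_\alpha(g,k,t),
\end{equation*}
so composition in $\mathcal{P}_A$ is a Mackey-type convolution in the middle variable, parameterized by $t\in T$. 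Since characters are faithful in characteristic $0$, this formula determines the entire algebraic structure.

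For condition~(1), I would reduce by faithful flat base change to the case where $\F$ is algebraically closed. The simple $\F[L\times L\times T]$-modules are then the exterior products $V_i\boxtimes V_j\boxtimes W_\kappa$ with $V_i,V_j\in \mathrm{Irr}(\F L)$ and $W_\kappa\in \mathrm{Irr}(\F T)$. The composition formula together with Schur orthogonality $\tfrac{1}{|L|}\sum_g\chi_{V_j}(g)\chi_{V_l}(g)=\delta_{V_j,V_l^\ast}$ gives
\begin{equation*}
(V_i\boxtimes V_j\boxtimes W_\kappa)\circ(V_l\boxtimes V_m\boxtimes W_\lambda)=\delta_{V_j,V_l^\ast}\,V_i\boxtimes V_m\boxtimes (W_\kappa\otimes W_\lambda),
\end{equation*}
which after the relabeling $e_{ij}(\kappa):=V_i\boxtimes V_{j^\ast}\boxtimes W_\kappa$ becomes the matrix-unit relation $e_{ij}(\kappa)e_{lm}(\lambda)=\delta_{jl}\,e_{im}(\kappa\otimes\lambda)$ with coefficients in $kR_\F(T)$. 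Hence $A(L\times L)\cong M_{|\mathrm{Irr}(\F L)|}(kR_\F(T))$. Since $kR_\F(T)$ is a finite-dimensional commutative $k$-algebra embedding into class functions on $T$, it is reduced and hence semisimple in characteristic~$0$; the matrix algebra over it is then semisimple.

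For condition~(2), write (after the same reduction) $\alpha=\sum_{a,b,c}\lambda_{abc}\,V_a\boxtimes V_b\boxtimes W_c$, and test against $\beta=V_{a'}\boxtimes V_{b'}\boxtimes W_{c'}$. The composition formula plus Schur orthogonality give
\begin{equation*}
\chi_{\beta^{\mathrm{op}}\circ\alpha}(l_1,l_2,t)=\chi_{V_{b'}}(l_1)\,\chi_{W_{c'}}(t)\sum_{b,c}\lambda_{a'^\ast,b,c}\,\chi_{V_b}(l_2)\chi_{W_c}(t).
\end{equation*}
For each $a',b'$ one can choose $l_1$ with $\chi_{V_{b'}}(l_1)\neq 0$; for each $t$ there is $c'$ with $\chi_{W_{c'}}(t)\neq 0$ (irreducible characters separate conjugacy classes); and the irreducible characters $\chi_{V_b}$ of $L$ and $\chi_{W_c}$ of $T$ are linearly independent. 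Putting these together, the vanishing of the left-hand side for all $l_1,l_2,t,a',b',c'$ forces $\lambda_{abc}=0$ for every $(a,b,c)$, i.e.\ $\alpha=0$ after descending to $\F$.

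The main obstacle I foresee is not conceptual but technical: the careful verification of the composition formula from the definitions of the Yoneda--Dress construction and of $\mathcal{P}_A$, and the Galois/flat descent arguments used to pass from $\bar\F$ back to $\F$ (and analogously on the coefficient side $k$). Both are standard in characteristic~$0$, where finite-dimensional algebras remain semisimple under field extensions and bilinear pairings remain non-degenerate.
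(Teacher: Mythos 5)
Your argument works cleanly only when $\F$ is algebraically closed, and the step you use to get there — ``reduce by faithful flat base change to the case where $\F$ is algebraically closed'' — is not available. The algebra in question is a $k$-algebra, and passing from $\F$ to $\bar{\F}$ is not an extension of the coefficient field $k$: it replaces $k\otimes_\Z R_\F(L\times L\times T)$ by the generally much larger algebra $k\otimes_\Z R_{\bar{\F}}(L\times L\times T)$, of which the former is only a proper subalgebra (its $k$-dimension is the number of irreducible $\F$-representations, not of $\bar{\F}$-representations). Semisimplicity does not pass to subalgebras, and non-degeneracy of your pairing does not pass to a subspace when the test vectors $\beta$ are also confined to that subspace: your hypothesis only gives $\beta^{\mathrm{op}}\circ\alpha=0$ for $\beta\in k\otimes R_\F$, whereas your computation tests against the simples $V_{a'}\boxtimes V_{b'}\boxtimes W_{c'}$, which in general do not lie in $k\otimes R_\F$. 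Averaging such a $\beta$ over the Galois group lands you back in $\Q\otimes R_\F$, but then you only learn that a Galois trace of $\beta^{\mathrm{op}}\circ\alpha$ vanishes, not the element itself. Your closing sentence (``finite-dimensional algebras remain semisimple under field extensions and bilinear pairings remain non-degenerate'') addresses extension of $k$, which is indeed harmless, but not the $R_\F$ versus $R_{\bar{\F}}$ issue, which is where the real difficulty sits. So there is a genuine gap, and it is precisely the point the paper's proof is built to avoid: instead of splitting over $\bar{\F}$, the paper constructs an integer-valued form $\langle U,V\rangle=\tau(V^\sharp\circ U)$ with $\tau(V)=\dim_\F V^{\Delta L\times T}$, transports character values into $\C$ via an abstract isomorphism of the relevant cyclotomic subfield, and shows the form is \emph{positive definite}; positivity is what makes the kernel argument (take $\beta=\alpha^*$) and the orthogonal-complement-of-ideals argument work over an arbitrary $\F$, with no descent needed.

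That said, the part of your argument over $\bar{\F}$ is sound and genuinely different in flavour from the paper: the convolution formula for characters is the same one the paper quotes (Lemma~7.1.3 of \cite{biset}, adapted to the shift by $T$), and your matrix-unit computation identifying $k\otimes R_{\bar{\F}}(L\times L\times T)$ with $M_{|\mathrm{Irr}(\bar{\F}L)|}\big(kR_{\bar{\F}}(T)\big)$ is correct and gives a more explicit structural description than the paper obtains. To repair the gap you could keep this description and add an honest descent: $\Q\otimes R_\F$ is (up to Schur-index scalars, invisible over $\Q$) the fixed subalgebra of $\Q\otimes R_{\bar{\F}}$ under a finite Galois group $\Gamma$ acting by algebra automorphisms, and in characteristic $0$ such a fixed algebra is semisimple (it is the corner $e(B\rtimes\Gamma)e$ of a semisimple algebra, $e=\tfrac{1}{|\Gamma|}\sum_\gamma\gamma$); a separate argument, e.g. the paper's positivity trick, is still needed for the non-degeneracy on the fixed subspace. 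As written, however, the proposal does not prove the theorem for general $\F$.
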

\begin{proof}
 We will show that $(kR_\F)_T$ fulfills the assumptions 1 and 2 of Proposition \ref{implies semisimple}. 

First step: we want to show that the bilinear map
\begin{align*}
	A(H\times L)\otimes A(H\times L)&\to A(L\times L)\\
	\alpha\otimes\beta&\mapsto\xi_k(\alpha,\beta)=\beta\op\circ \alpha
\end{align*}
is non degenerate when $A=(kR_\F)_T$, for any finite groups $H$ and $L$, in the sense that if $\xi_k(\alpha,\beta)=0$ for all $\beta\in A(H\times L)$, then $\alpha=0$.\par
For given groups $H$ and $L$, we have two $k$-vector spaces
$$U_k=k\otimes R_\F(H\times L\times T),\;\; W_k=k\otimes R_\F(L\times L\times T)$$
Saying that $\xi_k$ is non degenerate amounts to saying that the map
\begin{align*}
	\widehat{\xi}_k:U_k&\to \Hom_k(U_k,W_k)\\
	u&\mapsto \big(v\mapsto \xi_k(u,v)\big)
\end{align*}
is injective.\par
Now each of the $k$-vector spaces $U_k$ and $W_k$ is obtained by scalar extension $k\otimes_\Z(-)$ from the corresponding groups $U_\Z=R_\F(H\times L\times T)$, and $W_\Z=R_\F(H\times L\times T)$. These
two groups are free abelian groups of finite rank. It follows that
\begin{align*}
	\Hom_k(U_k,W_k)&=\Hom_k(k\otimes_\Z U_\Z,k\otimes_\Z W_\Z)\\
	&\cong k\otimes_\Z\Hom_\Z(U_\Z,W_\Z),
\end{align*}
and that $\widehat{\xi}_k=k\otimes\widehat{\xi}_\Z$. If we can prove that $\widehat{\xi}_\Z$ is injective, then $\widehat{\xi}_k$ will be injective as well, because $k$ is flat over $\Z$.\par
Now in order to prove this, we will build a bilinear form
$$\langle -,-\rangle :R_\F(H\times L\times T)\times R_\F(H\times L\times T)\to \Z$$
with the following two properties:
\begin{enumerate}
\item The form $\langle -,-\rangle$ is positive definite, that is, $\langle \alpha,\alpha\rangle\geq 0$ for any element $\alpha\in R_\F(H\times L\times T)$, with equality if and only if $\alpha=0$.
\item The kernel of $\widehat{\xi}_\Z$, i.e. the set of elements $\alpha\in R_\F(H\times L\times T)$ such that $\beta\op\circ\alpha=0$ for all $\beta\in R_\F(H\times L\times T)$, is equal to the left radical of $\langle -,- \rangle$, that is, the set of elements $\alpha\in R_\F(H\times L\times T)$ such that $\langle\alpha,\beta\rangle=0$ for all $\beta\in R_\F(H\times L\times T)$.
\end{enumerate}
If we can do this, then the kernel of $\widehat{\xi}_\Z$ will be $\zero$, for $\langle -,- \rangle$ is positive definite, so its radical is zero.\medskip\par
We first introduce a linear form $\tau$ on $R_\F(L\times L\times T)$, with values in $\Z$, as follows: for a finite dimensional $\F (L\times L\times T)$-module $V$, we set
\begin{equation} \label{tau}
\tau(V)=\dim_\F V^{\Delta L\times T},
\end{equation}
i.e. the dimension of the space of fixed points on $V$ under the action of the group
$$\Delta L\times T=\{(l,l,t)\mid l\in L,\;t\in T\}.$$
If $\chi_V$ is the character of $V$, using for example (\cite{alperin-bell} Sec. 14, Lemma 11), adapted to an arbitrary field of characteristic 0, we have
$$\tau(V)=\frac{1}{|L||T|}\sum_{\substack{l\in L\\t\in T}}\chi_V(l,l,t).$$
Next, for a finite dimensional $\F(H\times L\times T)$-module $U$, we endow the dual vector space $U^\sharp=\Hom_\F(U,\F)$ with the structure of $\F(L\times H\times T)$-module defined by
\begin{equation}\label{sharp}
\forall (l,h,t)\in L\times H\times T,\;\forall\alpha\in U^\sharp,\;\forall u\in U,\;\;\big((l,h,t)\alpha\big)(u)=\alpha\big((h^{-1},l^{-1},t^{-1})u\big).
\end{equation}
Then the character $\chi_{U^\sharp}$ of $U^\sharp$ is given by
$$\forall (l,h,t)\in L\times H\times T,\;\;\chi_{U^\sharp}(l,h,t)=\chi_U(h^{-1},l^{-1},t^{-1}),$$
where $\chi_U$ is the character of $U$.\par
Now when $U$ and $V$ are finite dimensional $\F(H\times L\times T)$-modules, we set
\begin{equation}\label{bilinear}
\langle U,V\rangle =\tau(V^\sharp\circ U),
\end{equation}
and we extend this definition to a bilinear form $\langle -,-\rangle$ defined on $R_\F(H\times L\times T)$, with values in $\Z$. \par
By Lemma~7.1.3 of~\cite{biset}, the character $\chi_{V^\sharp\circ U}$ of the $\F(L\times L\times T)$-module $V^\sharp \circ U$ is given by
$$\forall l,l'\in L,\;t\in T,\;\;\chi_{V^\sharp\circ U}(l,l',t)=\frac{1}{|H|}\sum_{h\in H}\chi_V(h^{-1},l^{-1},t^{-1})\chi_U(h,l',t),$$
where $\chi_U$ and $\chi_V$ are the characters of $U$ and $V$, respectively. It follows that
$$\langle U,V\rangle=\frac{1}{|H||L||T|}\sum_{\substack{h\in H\\l\in L\\t\in T}}\chi_V(h^{-1},l^{-1},t^{-1})\chi_U(h,l,t).$$

We aim to show that this bilinear form is positive definite. To do this, we first choose an integer $n$ (e.g. $n=|H||L||T|$) such that all the character values of the groups we consider lie in the subfield $S$ of $\F$ generated by the $n^{th}$-roots of unity (that is, the intersection $\F\cap \Q[\zeta]$ in an algebraic closure of $\F$, where $\zeta$ is a primitive $n^{th}$-root of unity). We fix a field isomorphism $\Phi$ from $\Q[\zeta]$ to $\Q[s]$, where $s$ is a primitive $n^{th}$-root of unity in the field $\C$ of complex numbers. We can assume that $\Phi$ is the identity map on $\Q\subseteq \Q[\zeta]$.\par
With this notation, for $\F (H\times L\times T)$-modules $U$ and $V$, we have
$$\langle U,V\rangle=\Phi\big(\langle U,V\rangle\big)=\frac{1}{|H||L||T|}\sum_{\substack{x\in H\\y\in L\\t\in T}}\sur{\Phi\big(\chi_V(x,y,t)\big)}\Phi\big(\chi_U(x,y,t)\big).$$
Using this, we can consider the difference $U-V\in R_\F(H\times L\times T)$, and compute
\begin{align*}
	\langle U-V,U-V\rangle&=\Phi\big(\langle U,U\rangle\big)+\Phi\big(\langle V,V\rangle\big)-\Phi\big(\langle U,V\rangle\big)-\Phi\big(\langle V,U\rangle\big)\\
	&=\frac{1}{|H||L||T|}\sum_{\substack{x\in H\\y\in L\\t\in T}}(\sur{a_U^{x,y,t}}a_U^{x,y,t}+\sur{a_V^{x,y,t}}a_V^{x,y,t}-\sur{a_U^{x,y,t}}a_V^{x,y,t}-\sur{a_V^{x,y,t}}a_U^{x,y,t})\\
	&=\frac{1}{|H||L||T|}\sum_{\substack{x\in H\\y\in L\\t\in T}}\big|a_U^{x,y,t}-a_V^{x,y,t}\big|^2,
\end{align*}
where for short $a_U^{x,y,t}=\Phi\big(\chi_U(x,y,t)\big)$ and $a_V^{x,y,t}=\Phi\big(\chi_V(x,y,t)\big)$. \par
It follows that $\langle U-V,U-V\rangle$ is a non negative integer, equal to zero  if and only if $a_U^{x,y,t}=a_V^{x,y,t}$ for all $(x,y,t)\in H\times L\times T$, i.e. if $\chi_U=\chi_V$, that is, if $U$ and $V$ are isomorphic. In other words, the bilinear form $\langle-,-\rangle$ defined on  $R_\F(H\times L\times T)$, with values in $\Z$, is positive definite. \par
Now assume that $\alpha\in R_\F(H\times L\times T)$ is such that $\beta\op\circ\alpha=0$ for any $\beta\in R_\F(H\times L\times T)$. Then in particular 
$$\tau(\beta\op\circ\alpha)=0=\langle\alpha,\beta^*\rangle,$$
where $\beta\to\beta^*$ is the linear automorphism of $R_\F(H\times L\times T)$ induced by the duality $V\mapsto V^*=\Hom_\F(V,\F)$, with its usual structure of $\F(H\times L\times T)$-module. Taking $\beta=\alpha^*$, we get $\langle\alpha,\alpha\rangle=0$, hence $\alpha=0$, as was to be shown.\mpn

Second step: it remains to show that the $k$-algebra $A(L\times L)$ is semisimple for any finite group~$L$. Let us denote by $A_k(L\times L)$ the algebra $A(L\times L)$, in order to emphasize the field $k$. Then we have $A_k(L\times L)\cong k\otimes_\Q A_\Q(L\times L)$. If we can show that the $\Q$-algebra $A_\Q(L\times L)$ is semisimple, then $A_k(L\times L)$ will be semisimple as well, by Corollary~7.8 of~\cite{curtis}, as $k$ is a separable $\Q$-algebra since it has characteristic 0.\par
We use again the linear form $\tau:R_\F(L\times L\times T)\to \Z$ defined in~(\ref{tau}), and the associated bilinear form $\langle-,-\rangle$ defined on $R_\F(L\times L\times T)$ with values in $\Z$, defined in~(\ref{bilinear}), in the case $H=L$.
We observe moreover that in the case $H=L$, the construction $U\mapsto U^\sharp$ introduced in~(\ref{sharp}) extends to an involutive linear automorphism of $R_\F(L\times L\times T)$. \par 
Furthermore, when $U$ and $V$ are finite dimensional $\F(L\times L\times T)$-modules, the character of the composition $U\circ V$ is given by
$$\forall l,l'\in L,\;t\in T,\;\;\chi_{U\circ V}(l,l',t)=\frac{1}{|L|}\sum_{x\in L}\chi_U(l,x,t)\chi_V(x,l',t),$$
where $\chi_U$ and $\chi_V$ are the characters of $U$ and $V$, respectively. It follows easily that the $\F(L\times L\times L)$-modules $(U\circ V)^\sharp$ and $V^\sharp\circ U^\sharp$ have the same character, hence they are isomorphic. Thus, we can extend the construction $V\mapsto V^\sharp$ to an antiautomorphism of the algebra $A_\Q(L\times L)=\Q\otimes_\Z R_\F(L\times L\times T)$, that we denote by $\alpha\mapsto \alpha^\sharp$.\par

We also extend the bilinear form $\langle-,-\rangle$ to a bilinear form $\langle-,-\rangle_\Q$ on $A_\Q(L\times L)=\Q\otimes_\Z R_\F(L\times L\times T)$, with values in $\Q$. This form $\langle-,-\rangle_\Q$ is again positive definite: if $\alpha\in \Q\otimes_\Z R_\F(L\times L\times T)$, then there is an integer $m$ such that $m\alpha\in R_\F(L\times L\times T)$, and
$$\langle \alpha,\alpha\rangle_\Q=\frac{1}{m^2}\langle m\alpha, m\alpha\rangle$$
is a non negative rational number, equal to 0 if and only if $m\alpha=0$, that is if $\alpha=0$.\par
Now let $U$, $V$ and $W$ be finite dimensional $\F(L\times L\times T)$-modules. Then
\begin{align*}
	\langle U\circ V,W\rangle&=\tau\big(W^\sharp\circ(U\circ V)\big)\\
	&=\tau\big((W^\sharp\circ U)\circ V\big)\\
	&=\tau\big((U^\sharp\circ W)^\sharp\circ V\big)\\
	&=\langle V,U^\sharp\circ W\rangle.
\end{align*}
By linearity, we get that 
\begin{equation}\label{associative}
	\forall (\alpha,\beta,\gamma)\in A_\Q(L\times L)^3,\;\;	\langle \alpha\circ\beta,\gamma\rangle_\Q=\langle \beta,\alpha^\sharp\circ\gamma\rangle_\Q.
\end{equation}
Let $I$ be a left ideal of $A_\Q(L\times L)$. By equation~(\ref{associative}), the orthogonal $I^\perp$ of $I$ for $\langle-,-\rangle_\Q$ is also a left ideal of $A_\Q(L\times L)$. Moreover $I\cap I^\perp=\zero$ since $\langle-,-\rangle_\Q$ is positive. Finally $\dim_\Q I+\dim_\Q I^\perp=\dim_\Q A(L\times L)$ since $\langle-,-\rangle_\Q$ is non degenerate. It follows that $I\oplus I^\perp=A_\Q(L\times L)$, so any left ideal of $A_\Q(L\times L)$ has a complement left ideal in $A_\Q(L\times L)$. That is, the algebra $A_\Q(L\times L)$ is semisimple. This completes the proof of Theorem~\ref{shifted semisimple}.
\end{proof}

We devote the rest of the section to prove a generalization of Theorem 1.9 in \cite{barker}, regarding the multiplicities with which the simple modules appear in a $(kR_\rac)_T$-module, when $(kR_\rac)_T$ is defined in a class of groups $\mathcal{D}$ such that $(|T|,\, |G|)=1$ for every $G\in \mathcal{D}$.

For any group $T$, the linearization morphism defines an epimorphism of Green biset functors $kB_T\rightarrow (kR_{\rac})_T$ so, in view of Lemma \ref{simquo}, the simple $(kR_{\rac})_T$-modules are some of the simple $kB_T$-modules. It is shown in \cite{garcia-shifted} that any simple $(kR_{\rac})_T$-module has a unique (cyclic) minimal group. Hence, in this case we obtain an analogous result of Lemma 3.3 in \cite{lachica}: The simple $(kR_{\rac})_T$-modules are the simple $kB_T$-modules $S_{C,\, V}$ for $C$ a cyclic group and $V$ a simple $\widehat{kB_T}(C)$-module such that $\widehat{(kR_{\rac})_T}(C)\neq 0$ and $V$ is a (simple) $\widehat{(kR_{\rac})_T}(C)$-module. 

We recall the following notation from \cite{lachica}.

\begin{nota}
\label{notavieja}
Let $G$, $H$, $K$ and $T$ be groups. Let $E$ be a subgroup of $G\times H\times T$ and $D$ be a subgroup of $H\times K\times T$. Then $E* D$ denotes the following subgroup of $G\times K\times T$.
\begin{displaymath}
E*D=\{(g,\, k,\, t)\in G\times K\times T\mid \exists h\in H\textrm{ s.t. } (g,\, h,\, t)\in E\textrm{ and } (h,\, k,\, t)\in D \}.
\end{displaymath}
We also denote by $p_1(E)$, $p_2(E)$ and $p_3(E)$ the projections of $E$ on $G$, $H$ and $T$ respectively; $p_{1,\, 2}(E)$ will denote the projection over $G\times H$, and in the same way we define the other possible combinations of indices. We write $k_1(E)$ for 
\begin{displaymath}
\{h\in p_1(E)\mid (h,\, 1,\, 1)\in E\},
\end{displaymath}
which is a normal subgroup of $p_1(E)$. Similarly, we define $k_2(E)$ and $k_3(E)$.
\end{nota}

The following lemma is a straightforward generalization of the corresponding results appearing in Proposition 2.3.22 of \cite{biset}.
\begin{lema}
\label{pyk}
Let $E$ and $D$ be as in the previous notation. We have $p_1(E*D)\subseteq p_1(E)$ and $k_1(E)\subseteq k_1(E*D)$.
\end{lema}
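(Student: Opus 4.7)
The plan is simply to unwind Notation \ref{notavieja} in both cases; no technical machinery beyond the fact that $D$ is a subgroup (so contains the identity) is required. The first inclusion $p_1(E*D) \subseteq p_1(E)$ is an immediate chase: if $g \in p_1(E*D)$, there exist $k \in K$ and $t \in T$ with $(g,k,t) \in E*D$, which by definition means that for some $h \in H$ one has $(g,h,t) \in E$ and $(h,k,t) \in D$. The first of these memberships already places $g$ in $p_1(E)$, which is all we need.

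For the second inclusion $k_1(E) \subseteq k_1(E*D)$, take $g \in k_1(E)$, so that $g \in p_1(E)$ and $(g,1,1) \in E$. I would then use $h = 1$ as the witness in the definition of $E*D$: we have $(g,1,1) \in E$ by hypothesis, and $(1,1,1) \in D$ because $D$ is a subgroup of $H \times K \times T$. Hence $(g,1,1) \in E*D$, which simultaneously shows $g \in p_1(E*D)$ and $(g,1,1) \in E*D$, i.e.\ $g \in k_1(E*D)$.

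There is no real obstacle here; the lemma is a purely formal consequence of the definitions plus the subgroup property of $D$. The only thing to be careful about is the convention for the ambient group in the product — one must read the components of triples in $E$ as lying in $G \times H \times T$ and those of $D$ in $H \times K \times T$, so that the common middle coordinate $h$ is what gets ``contracted'' in the definition of $E*D$. Once this bookkeeping is clear, both statements drop out in one line each.
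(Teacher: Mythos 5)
Your proof is correct and is exactly the elementary definition-chase intended: the paper itself gives no argument, deferring to the analogous statements in Proposition 2.3.22 of \cite{biset}, whose proof proceeds by the same unwinding of the definitions of $p_1$, $k_1$ and $E*D$, with $(1,1,1)\in D$ supplying the witness for the second inclusion just as you do.
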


In the remainder of this section we assume that $T$  is a group such $(|T|,\, |G|)=1$ for every $G\in \mathcal{D}$ and we consider the shifted functors $kB_T$ and $(kR_\rac)_T$ defined in $\mathcal{D}$.

\begin{lema}
\label{divide}
%Let $T$ be a finite group such that $(|T|,\, |G|)=1$ for all $G$ in $\mathcal{G}$. Consider the shifted functor $kB_T$ defined in $\mathcal{G}$. 
Let $S$ be a simple $kB_T$-module and $K$ be a minimal group for $S$. If $H$ is a group such that $S(H)\neq 0$, then $K$ is a isomorphic to a subquotient of $H$.  
\end{lema}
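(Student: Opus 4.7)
The plan is to adapt the standard minimal-group argument for ordinary biset functors (Chapter~3 of~\cite{biset}) to the shifted setting $kB_T$, and then use the coprimality hypothesis $(|T|,|G|)=1$ to turn a subquotient of $H\times T$ into a subquotient of $H$.

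First, since $S\cong S_{K,V}$ with $V=S(K)$ a simple $\widehat{kB_T}(K)$-module, the hypothesis $S(H)\neq 0$ yields, via the description of $J_{K,V}(H)$ recalled in the Preliminaries, elements $a\in kB_T(H\times K)$ and $b\in kB_T(K\times H)$ such that $b\circ a$ has nonzero image in $\widehat{kB_T}(K)$. Expanding $a$ and $b$ as $k$-linear combinations of transitive bisets and applying the composition formula of Definition~\ref{PA}, one writes $b\circ a$ as a linear combination of transitive bisets $[(K\times K\times T)/F]$, each $F$ of the form $D*E'$ in the sense of Notation~\ref{notavieja}, for some $D\leq K\times H\times T$ appearing in $b$ and some $H$-conjugate $E'$ of a subgroup $E\leq H\times K\times T$ appearing in $a$. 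Since the image of $b\circ a$ in $\widehat{kB_T}(K)$ is nonzero, at least one of these $F$ itself defines a nonzero element of the essential algebra.

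The second step is to show that for such an $F$ necessarily $p_1(F)=K$ and $k_1(F)=1$. If instead $p_1(F)=K'\lneq K$, then $F\leq K'\times K\times T$, and a direct application of the composition formula yields that $\Ind_{K'}^K\circ[(K'\times K\times T)/F]$ equals $[(K\times K\times T)/F]$ plus further transitive summands whose subgroups still have first projection contained in $K'$; by induction on $|p_1(F)|$ (using Lemma~\ref{pyk}) one deduces that $[(K\times K\times T)/F]$ lies in the ideal of compositions through groups of order $<|K|$, contradicting its nonvanishing in the essential algebra. An analogous argument with $\Inf_{K/N}^K$, applied when $k_1(F)=N\neq 1$ (so that $F$ is the preimage in $K\times K\times T$ of a subgroup $F'\leq(K/N)\times K\times T$), handles the second condition. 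Combining $p_1(F)=K$ and $k_1(F)=1$ with Lemma~\ref{pyk} forces $p_1(D)=K$ and $k_1(D)=1$ (the $H$-conjugation in $E'$ only affects the middle coordinate), so $p_{2,3}|_D\colon D\hookrightarrow H\times T$ is injective while $p_1|_D\colon D\twoheadrightarrow K$ is surjective. Setting $H^*=p_{2,3}(D)\leq H\times T$ then yields a surjective homomorphism $\phi\colon H^*\twoheadrightarrow K$.

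To finish, set $L=H^*\cap(\{1\}\times T)$. Then $L\triangleleft H^*$ (since $\{1\}\times T$ is normal in $H\times T$) and $|L|$ divides $|T|$; by hypothesis $|K|$ is coprime to $|T|$, so $|\phi(L)|$ divides $\gcd(|T|,|K|)=1$, giving $L\leq\ker\phi$. Consequently $K\cong H^*/\ker\phi\cong(H^*/L)/(\ker\phi/L)$, and since the projection $H\times T\to H$ identifies $H^*/L$ with a subgroup of $H$, we conclude that $K$ is isomorphic to a subquotient of $H$. The main technical obstacle is verifying in detail the ``factoring through smaller groups'' arguments in the shifted setting, since the composition formula of Definition~\ref{PA} involves an extra diagonal identification on $T$: the Mackey-like decompositions may produce several transitive summands, which must all be controlled together via the inductions above and Lemma~\ref{pyk}. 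Once this combinatorial step is settled, the descent from $H\times T$ to $H$ via coprimality is straightforward.
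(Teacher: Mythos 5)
Your proposal is correct and follows essentially the same route as the paper: nonvanishing of some $\beta\circ\alpha$ in the essential algebra, reduction to a transitive element $(K\times K\times T)/F$ with $F$ a star product, the conclusion $p_1(F)=K$ and $k_1(F)=1$, transfer of these conditions to the subgroup coming from $b$ via Lemma~\ref{pyk}, and the coprimality argument killing the $T$-part; the only difference is that the paper simply cites Lemmas~4.5 and~4.8 of \cite{lachica} for the composition formula and the essential-algebra criterion that you re-derive by hand. (Two harmless inaccuracies in your sketch: composing with $\Ind_{K'}^{K}$, resp.\ $\Inf_{K/N}^{K}$, in the shifted category gives exactly the single transitive element $(K\times K\times T)/F$, so no extra summands or induction on $|p_1(F)|$ is needed, and the conjugating elements arising from the double cosets lie in $H\times T$, not just $H$, though this does not affect the application of Lemma~\ref{pyk}.)
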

\begin{proof}
Since $V=S(K)\neq 0$,  we know that $S(H)$ is isomorphic to the quotient of $L_{K,\,V}(H)=kB_T(H\times K)\otimes_{kB_T(K\times K)}V$ by
\begin{displaymath}
J_{K,\, V}(H)=\left\{\sum_{i=1}^n\alpha_i\otimes v_i\in L_{K,\,V}(H)\mid \sum_{i=1}^n(\beta\circ\alpha_i)v_i=0\, \forall \beta\in kB_T(K\times H)\right\}.
\end{displaymath} 
Hence, if $S(H)\neq 0$, we must have $L_{K,\,V}(H)\neq J_{K,\, V}(H)$. This means that there exist $\alpha \in kB_T(H\times K)$ and $\beta \in kB_T(K\times H)$ such that $\beta\circ\alpha$ has a non zero action on $V$. In particular, $\beta\circ\alpha$ is not zero in $\widehat{kB_T}(K\times K)$. By Lemma 4.5 in \cite{lachica}, this means that there exist $E$ a subgroup of $K\times H\times T$ and $D$ a subgroup of $H\times K\times T$ such that
\begin{displaymath}
\left((K\times H\times T)/E\right)\circ\left((H\times K\times T)/D\right)=\sum_{(h,\, t)}(K\times K\times T)/E* ^{(h,\, 1,\,t)}D
\end{displaymath}
is not zero in $\widehat{kB_T}(K\times K)$, where $(h,\, t)$ runs through a set of representatives of the double cosets $p_{2,\, 3}(E)\backslash H\times T /p_{1,\, 3}(D)$. In turn, there exists $(h,\,t)\in H\times T$ such that $(K\times K\times T)/E* ^{(h,\, 1,\,t)}D$ is not zero in $\widehat{kB_T}(K\times K)$. By Lemma 4.8 in \cite{lachica}, this implies that $p_1(E* ^{(h,\, 1,\,t)}D)/k_1(E* ^{(h,\, 1,\,t)}D)$ is isomorphic to $K$. Then, by the previous lemma, $p_1(E)/k_1(E)$ is isomorphic to $K$. It follows that $E$ is equal to
\begin{displaymath}
E=\{(f(h,\, t),\, h,\, t)\mid (h,\, t)\in A\},
\end{displaymath}
where $A$ is a subgroup of $H\times T$ and $f$ is a surjective homomorphism from $A$ to $K$. Since $T$ has order relatively prime to $|H|$ and $|K|$, we have that $A=H_0\times T_0$, for some $H_0\leqslant H$ and $T_0\leqslant T$, and that  $1\times T_0$ is contained in $\Ker f$. Hence $K$ is isomorphic to a quotient of $H_0$, i.e. to a subquotient of $H$.
\end{proof}

\begin{rem}
Observe that, in the situation of the previous lemma, $K$ is the only minimal group of $S$ up to isomorphism. 
\end{rem}

It is shown in Section 3.3 of \cite{garcia-shifted} that  there is an isomorphism of $k$-algebras 
\begin{displaymath}
\widehat{(kR_\rac)_T}(G)\cong \prod_{\tau\in [CS(T)]}\widehat{kR_\rac}(G)
\end{displaymath}
where $[CS(T)]$ is a set of representatives of the conjugacy classes of cyclic subgroups of $T$. Hence, for  a simple $\widehat{(kR_\rac)_T}(G)$-module $W$, there exist a unique $\tau$ and a unique simple $\widehat{kR_\rac}(G)$-module $V$ such that $W$ is isomorphic to the restriction of scalars of $V$ via the projection on the $\tau$-th coordinate factor $\widehat{kR_\rac}(G)$. We will denote this module by $V_\tau$. Furthermore, by Corollary 3.24 of \cite{garcia-shifted}, the simple $(kR_\rac)_T$-modules are in correspondence with the triplets $(C,\, V,\, \tau)$ such that $C$ is a cyclic group, $V$ is a primitive $k\Aut(C)$-module and $\tau$ is cyclic subgroup of $T$. The group $C$ and the module $V$ are taken up to isomorphism and $\tau$ up to conjugation. We will abbreviate a triplet $(C,\, V,\, \tau)$ by $(C,\, V_\tau)$ and we will write $S_{C,\, V_\tau}$ for the corresponding simple $(kR_\rac)_T$-module.

%The following corollary generalizes Theorem 1.9 in \cite{barker}.

\begin{prop}
Let $A=(kR_\Q)_T$ be defined in $\mathcal{D}$, where $k$ is a field of characteristic~0 and $T$ is a finite group in $\mathcal{D}$ such that $(|T|,\, |G|)=1$ for all $G\in \mathcal{D}$. Given an $A$-module~$M$, then in the expression
\begin{displaymath}
M\cong \bigoplus_{(C,\, V_\tau)}m_{C,\, V_\tau}S_{C,\, V_\tau},
\end{displaymath}
for any  $(C, V_\tau)$, the multiplicity $m_{C,\, V_\tau}$ equals the multiplicity of $V_\tau$ in the \mbox{$kB_T(C\times C)$}-module $M(C)$.  In particular, for any cyclic group $C$ and any primitive $k\Aut(C)$-module~$V$, the multiplicities $m_{C,\, V_\tau}$ satisfy that
\begin{displaymath}
\sum_{\tau\in [CS(T)]}m_{C,\, V_\tau}
\end{displaymath}
is equal to the multiplicity of $V$ in the $k\Aut(C)$-module $M(C)$. 
\end{prop}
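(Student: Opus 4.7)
The plan is to apply Theorem~\ref{shifted semisimple} to decompose $M$ as a direct sum of simple $A$-modules (with $A=(kR_\Q)_T$), evaluate at $C$, and track the multiplicity of $V_\tau$ via a central idempotent argument.

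By Theorem~\ref{shifted semisimple}, $M\cong\bigoplus_{(C',V'_{\tau'})}m_{C',V'_{\tau'}}S_{C',V'_{\tau'}}$. Evaluating at $C$ yields $M(C)\cong\bigoplus m_{C',V'_{\tau'}}S_{C',V'_{\tau'}}(C)$ as $kB_T(C\times C)$-modules (via the epimorphism $kB_T\to(kR_\Q)_T$), so the multiplicity of $V_\tau$ in $M(C)$ equals $\sum m_{C',V'_{\tau'}}\mu_{(C',V'_{\tau'})}$, with $\mu_{(C',V'_{\tau'})}$ the multiplicity of $V_\tau$ in $S_{C',V'_{\tau'}}(C)$. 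The first claim reduces to showing $\mu_{(C',V'_{\tau'})}=\delta_{(C',V'_{\tau'}),(C,V_\tau)}$. The case $(C',V'_{\tau'})=(C,V_\tau)$ is trivial since $S_{C,V_\tau}(C)=V_\tau$, and when $C'\cong C$ with $V'_{\tau'}\neq V_\tau$ it follows from distinctness of simple $\widehat{A}(C)$-modules applied to $S_{C,V'_{\tau'}}(C)=V'_{\tau'}$.

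The main obstacle is the case $C'\not\cong C$: by Lemma~\ref{divide}, $C'$ is a proper subquotient of $C$, hence (both being cyclic) a proper subgroup, so $|C'|<|C|$. Using semisimplicity of $A(C\times C)$ (Theorem~\ref{shifted semisimple}), one lifts the block idempotent $e_\tau\in\widehat{A}(C)$ of $V_\tau$ to a central idempotent $\tilde e_\tau\in A(C\times C)$ satisfying $\tilde e_\tau\cdot I=0$, where $I=\ker(A(C\times C)\to\widehat{A}(C))$ is the ideal of morphisms factoring through groups of order strictly less than $|C|$. It suffices to show $\tilde e_\tau$ annihilates $L_{C',V'_{\tau'}}(C)=A(C\times C')\otimes_{A(C'\times C')}V'_{\tau'}$, which reduces to checking that $A(C\times C')=I\cdot A(C\times C')$ as left $A(C\times C)$-module. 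Given $a\in A(C\times C')$, writing $a=a\circ\varepsilon_{C'}$ and exhibiting a factorization $\varepsilon_{C'}=\frac{1}{[C:C']}(\phi_2\circ a')$ in $\mathcal{P}_A$, with $\phi_2\in A(C'\times C)$ and $a'\in A(C\times C')$ built from the regular representation $\Q C$ (together with the identity element of $A$ on the $T$-factor), yields $a=\frac{1}{[C:C']}(a\circ\phi_2)\circ a'$ with $a\circ\phi_2\in A(C\times C)$ factoring through $C'$, hence lying in $I$. This factorization of $\varepsilon_{C'}$ through a strictly larger cyclic group is the technical heart of the argument.

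For the second claim, the isomorphism $\widehat{(kR_\Q)_T}(C)\cong\prod_{\tau\in[CS(T)]}\widehat{kR_\Q}(C)\cong\prod_\tau k\Aut(C)$ from~\cite{garcia-shifted} identifies each $V_\tau$ with the primitive $k\Aut(C)$-module $V$ placed in the $\tau$-th factor. Viewing $M(C)$ as a $k\Aut(C)$-module through the diagonal embedding $k\Aut(C)\to\prod_\tau k\Aut(C)$, the multiplicity of $V$ gathers contributions from every factor, so $\sum_\tau m_{C,V_\tau}$ equals the multiplicity of $V$ in the $k\Aut(C)$-module $M(C)$, as claimed.
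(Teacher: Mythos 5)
Your treatment of the first claim is correct, but it follows a genuinely different route from the paper for the key case $|C'|<|C|$. The paper shows, by an explicit biset computation, that the elements $\beta_r$ for $r\in\Ker\pi_{m,\,n}$ act trivially on $L_{C',V'_{\tau'}}(C)$ (this is where $(|T|,\,|G|)=1$ enters, through writing a subgroup $A$ of $C\times C'\times T$ as $A_0\times T_0$), so that $S_{C',V'_{\tau'}}(C)$ has no primitive $k\Aut(C)$-constituents at all. You instead invoke the semisimplicity of $A(C\times C)$ from Theorem~\ref{shifted semisimple} to lift the block idempotent of $V_\tau$ to a central idempotent $\tilde e_\tau$ killing the ideal $I$, and you kill $L_{C',V'_{\tau'}}(C)$ by factoring $\varepsilon_{C'}$ through $C$; the factorization does hold, since with $\phi_1\in A(C\times C')$ and $\phi_2\in A(C'\times C)$ given by $\Q C$ with trivial $T$-action, the character formula for composition gives $\phi_2\circ\phi_1=[C:C']\,\varepsilon_{C'}$, and $[C:C']$ is invertible in $k$. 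This is a legitimate, arguably slicker, proof of the first statement, using coprimality only through Lemma~\ref{divide}.

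The second claim, however, has a genuine gap. There you must compare multiplicities after restricting the action to $k\Aut(C)$ (via the bisets attached to automorphisms of $C$), and the constituents $S_{C',V'_{\tau'}}(C)$ with $|C'|<|C|$ must be shown to contribute no copy of the primitive module $V$ under this restriction. Your argument treats $M(C)$ as a module over $\widehat{A}(C)\cong\prod_\tau k\Aut(C)$ through a ``diagonal embedding'', but $M(C)$ is not an $\widehat{A}(C)$-module: the constituents with smaller minimal group are not annihilated by $I$ (your own first step shows $I$ generates them). Moreover, knowing that no $A(C\times C)$-composition factor of $S_{C',V'_{\tau'}}(C)$ is isomorphic to any $V_\tau$ does not imply that its restriction to $k\Aut(C)$ avoids the simple module $V$; excluding that is precisely what the paper's computation achieves, since triviality of the $\Ker\pi_{m,\,n}$-action forces the $k\Aut(C)$-action on $S_{C',V'_{\tau'}}(C)$ to factor through $(\Z/n\Z)^{\times}$ with $n=|C'|$ a proper divisor of $|C|$, which rules out primitive constituents. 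So to complete the second statement you must either reinstate an argument of that kind (and this is where the hypothesis $(|T|,\,|G|)=1$ is used essentially) or find a substitute controlling the $k\Aut(C)$-restriction of the small-minimal-group constituents; the block-idempotent argument alone does not do it. Finally, even for the constituents with minimal group $C$, you should justify that under $\widehat{A}(C)\cong\prod_\tau k\Aut(C)$ the natural $k\Aut(C)$-action is the componentwise one, i.e.\ that $S_{C,W_\tau}(C)\cong S_{C,W_\rho}(C)$ as $k\Aut(C)$-modules for all $\tau,\rho$, a fact the paper cites explicitly.
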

\begin{proof}
Suppose $M\cong \bigoplus_{(C,\, V_\tau)}m_{C,\, V_\tau}S_{C,\, V_\tau}$ and let $D$ be a cyclic group. By the paragraph before Notation \ref{notavieja} and  Lemma \ref{divide} we have
\begin{displaymath}
M(D)\cong \bigoplus_{\substack{(C,\, V_\tau)\\ |C|\textrm{ divides } |D|}}m_{C,\, V_\tau}S_{C,\, V_\tau}(D).
\end{displaymath}
Let $|D|=m$. We will show first that  no $ S_{C,\, V_\tau}(D)$ with $|C|$ a proper divisor of $m$  contains primitive $k\Aut(D)$-modules. %We recall that $M$ has a natural structure of $kB_T$-module and that the action of $k\Aut(D)$ in $M(D)$ can be given by the series of injective morphisms
%\begin{displaymath}
%k\Aut (D)\rightarrow kB(D\times D)\rightarrow kB_T(D\times D).
%\end{displaymath}
So,  if we suppose that $W_\rho=S_{D,\,W_\rho}(D)$ is contained in $S_{C,\,V_\tau}(D)$, for such a $C$, then $S_{C,\,V_\tau}(D)$ would contain a primitive $k\Aut(D)$-module, a contradiction.% We will show that this leads to a contradiction.\par}
%So, once we show that  $S_{C,\, V_\tau}(D)$ does not contain a primitive $k(\Z/m\Z)^{\times}$-module, we will also  have that $S_{D,\, W_\rho}(D)$ is not a $kB_T(D\times D)$-submodule of $S_{C,\, V_\tau}(D)$, for any other $S_{D,\, W_\rho}$.

%and $W$ be a primitive $k(\Z/m\Z)^{\times}$-module. %Suppose $m_{D,\, W}\neq 0$. 
%Clearly, $W$ is not contained in $S_{D,\, W'}(D)$ for any $W'$ not isomorphic to $W$. Hence, it suffices to show that $W$ is not contained in any of the evaluations $S_{C,\, V}(D)$ with $|C|<|D|$. 
%Take one such $C$ and let $|C|=n$.
We have that $S_{C,\, V_\tau}(D)$ is equal to a quotient of $L_{C,\, V_\tau}(D)=kB_T(D\times C)\otimes_{kB_T(C\times C)}V_\tau$. 
 Let us see that the kernel of the natural epimorphism
\begin{displaymath}
\pi_{m,\, n}:(\Z/m\Z)^{\times}\rightarrow(\Z/n\Z)^{\times},
\end{displaymath}
acts trivially in $L_{C,\, V_\tau}(D)$. Then, it acts trivially in $S_{C,\, V_\tau}(D)$. 
%by the submodule
%\begin{displaymath}
%J_{C,\, V}(D)=\left\{\sum_{i=1}^n\varphi_i\otimes v_i\mid \sum_{i=1}^n(\psi \varphi_i)\cdot v_i\forall \psi\in kB(C,\, D) \right\}.
%\end{displaymath}

For an element $r\in (\Z/m\Z)^{\times}$, we denote by $\Delta_r(D)$ the subgroup $\{(d,\, d^r)\mid d\in D\}$ of $D\times D$ and by $\beta_r$ the $(D\times D\times T)$-set $(D\times D\times T)/(\Delta_r(D)\times T)$. If $\alpha$ is a transitive $(D\times C\times T)$-set, say $\alpha=(D\times C\times T)/A$, then
\begin{displaymath}
\beta_r\circ \alpha\cong (D\times C\times T)/\big((\Delta_r(D)\times T)\ast A\big),
\end{displaymath}
by Lemma 4.5 in \cite{lachica}. Since $(|T|, \, |D|)=1$, we have that $A=A_0\times T_0$, where $A_0\leqslant D\times C$ and $T_0\leqslant T$, hence
\begin{displaymath}
\begin{array}{rcl}
(\Delta_r(D)\times T)\ast A&=&\{(d,\, c, \, t)\in D\times C\times T\mid (d^r,\, c,\, t)\in A\}\\
&=& \{(d,\, c)\in D\times C\mid (d^r,\, c)\in A_0\}\times T_0.
\end{array}
\end{displaymath}
But if $r$ is in $\Ker\,\pi_{m,\,n}$, then sending an element $(d,\, c)$ of $A_0$ to $(d^t,\, c)=(d,\, c)^t$ defines an automorphism on $A_0$, hence
\begin{displaymath}
\{(d,\, c)\in D\times C\mid (d^r,\, c)\in A_0\}=A_0.
\end{displaymath}
So, $(\Delta_r(D)\times T)\ast A=A$ and $\Ker\,\pi_{m,\,n}$ acts trivially in $L_{C,\, V_\tau}(D)$.

On the other hand, if $S_{D,\, Z_\gamma}$ is such that $\rho\neq \gamma$ in $[CS(T)]$ or $W\ncong Z$ as $k\Aut (D)$-modules, then $S_{D,\, W_\rho}(D)\ncong S_{D,\,Z_\gamma}(D)$ as $\widehat{(kR_\rac)_T}(D)$-modules. Now, we have the following chain of epimorphisms of rings
\begin{displaymath}
kB_T(D\times D)\rightarrow (kR_\rac)_T(D\times D)\rightarrow \widehat{(kR_\rac)_T}(D).
\end{displaymath}
So $S_{D,\, W_\rho}(D)\ncong S_{D,\,Z_\gamma}(D)$ in the $kB_T(D\times D)$-module $M(D)$. This proves the first statement in the proposition. The second statement comes from the first part of the proof and the fact that if we restrict the action from $kB_T(D\times D)$ to $k\Aut(D)$, then $S_{D,\, W_\tau}(D)\cong S_{D,\, W_\rho}(D)$, as $k\Aut(D)$-modules. 
\end{proof}

\begin{rem}
Let  $A$ be as in the previous proposition. If $S_{C,\, V_\tau}$, with $C$ a cyclic group and $V$ a primitive $k\Aut(C)$-module, is an $A$-submodule of $A_L$, then $S_{C,\, V_\tau}(L)\neq 0$ and so $C$ is a subquotient of $L$. That is, the only simple $A$-modules that may appear in the semisimple decomposition of $A_L$ are those corresponding to cyclic subquotients of $L$.
\end{rem}

%\bibliographystyle{abbrv}
%\bibliography{semi2}

\centerline{\rule{5ex}{.1ex}}
\begin{flushleft}
Serge Bouc, CNRS-LAMFA, Universit\'e de Picardie, 33 rue St Leu, 80039, Amiens, France.\\
{\tt serge.bouc@u-picardie.fr}\vspace{1ex}\\
Nadia Romero, DEMAT, UGTO, Jalisco s/n, Mineral de Valenciana, 36240, Guanajuato, Gto., Mexico.\\
{\tt nadia.romero@ugto.mx}
\end{flushleft}

\end{document}